\providecommand{\U}[1]{\protect\rule{.1in}{.1in}}
\numberwithin{equation}{section} \pagestyle{plain}
\newtheorem{theorem}{Theorem}[section]
\newtheorem{definition}{Definition}[section]
\newcommand{\ba}{\begin{eqnarray}}
\newcommand{\ea}{\end{eqnarray}}
\newcommand{\ibs}{\boldsymbol{i}}
\newcommand{\lambdabs}{\boldsymbol{\lambda}}
\def\AA{\boldsymbol{A}}
\def\TT{\boldsymbol{T}}
\def\YY{\boldsymbol{Y}}
\def\zz{\boldsymbol{z}}
\DeclarePairedDelimiterX{\inp}[2]{\langle}{\rangle}{#1, #2}
\begin{document}

\title{On non-linear dependence of multivariate subordinated L\'evy processes}
\author{E. Di Nardo\thanks{Department of Mathematics \lq\lq G. Peano\rq\rq, University
of Turin, elvira.dinardo@unito.it}, M. Marena\thanks{Department of Economics
and Statistics, University of Turin, marina.marena@unito.it}, P.
Semeraro\thanks{Department of Mathematical Sciences \lq\lq G. Lagrange\rq\rq, Politecnico di Torino, patrizia.semeraro@polito.it} }
\date{}
\maketitle

\begin{abstract}
Multivariate subordinated L\'{e}vy processes are widely employed in finance for modeling multivariate asset returns. We propose to exploit non-linear dependence among financial assets through multivariate cumulants of these processes, for which we provide a closed form formula by using the multi-index generalized
Bell polynomials.  Using multivariate cumulants, we perform a sensitivity analysis,  to investigate non-linear dependence as a function of the model parameters driving the dependence structure.\smallskip\newline
 \smallskip\newline\noindent\textbf{AMS 2000 Mathematics Subject
Classification}: 60G51.

\noindent\textbf{Keywords}: L\'evy process, subordination, cumulant, normal inverse Gaussian.

\end{abstract}

\section{Introduction}

L\'{e}vy processes are widely used in finance to model asset returns being more versatile than Gaussian driven processes as they can model skewness and
excess kurtosis. Their characteristic function describes the distribution of
each independent increment through the L\'{e}vy-Khintchine representation. In
the following we focus our attention on the moment generating function (mgf)
and its relation with the cumulant generating function (cgf). In particular,
if $\{\boldsymbol{L}(t),t\geq0\}$ is a $\mathbb{R}^{n}$-valued L\'{e}vy
process with mgf $M_{\boldsymbol{L}(t)}(\boldsymbol{z}),\boldsymbol{z}%
\in\mathbb{R}^{n}$ at each $t,$ the L\'{e}vy-Khintchine
representation allows us to work with $\exp\big[tK_{\boldsymbol{Y}
}(\boldsymbol{z})\big],$ where $K_{\boldsymbol{Y}}(\boldsymbol{z})$ is the cgf of $\boldsymbol{Y}%
=\boldsymbol{Y}(1),$ the time one distribution of the L\'{e}vy process.

Stochastically altering the clock on which the L\'{e}vy process is run models
the economic time of the overall market activity: time runs fast when there
are a lot of orders, while it slows down when trade is stale. The
subordination of a L\'{e}vy process $\boldsymbol{L}(t)$ by a univariate
subordinator $T(t)$, i.e. a L\'{e}vy process on $\mathbb{R}_{+}=[0,\infty)$
with increasing trajectories, independent of $\boldsymbol{L}(t)$, defines a
new process $\boldsymbol{Y}(t)$ by the composition $\boldsymbol{Y}%
(t):=\boldsymbol{L}(T(t)).$ Unfortunately, the resulting models exhibit
several shortcomings including the lack of independence between asset returns
and a limited span of linear correlations. Furthermore, there is empirical
evidence that trading activity is different across assets (\cite{Ha}). From
the theoretical perspective, multivariate subordination allowing different
assets to have different time-changes was introduced in the work of \cite{Ba}.
Given a $\mathbb{R}^{n}$-valued multiparameter L\'{e}vy process
$\{\boldsymbol{L}(\boldsymbol{s}),\,\boldsymbol{s}\in\mathbb{R}_{+}^{d}\}$ as
defined in \cite{Ba}, the $\mathbb{R}^{n}$-valued subordinated L\'{e}vy
process $\boldsymbol{Y}(t)$ is the composition $\boldsymbol{Y}%
(t)=\boldsymbol{L}(\boldsymbol{T}(t)),$ where $\boldsymbol{T}(t)$ is a
multivariate subordinator, i.e. a L\'{e}vy process on $\mathbb{R}_{+}^{d}$
whose trajectories are increasing in each coordinate, independent of
$\boldsymbol{L}(t)$. 

In Section $2,$ we give a closed form formula of joint
(or cross) cumulants of $\boldsymbol{Y}(t)$ through the multi-index
generalized (complete exponential) Bell polynomials introduced in
\cite{di2011new}. We use an \textit{umbral} evaluation operator
(\cite{di2015symbolic}) to recover the contribution of joint cumulants of the $d$-dimensional subordinator. For multivariate subordinated Brownian motions,
this closed form formula further simplifies by taking advantage of the
well-known property that cumulants of Brownian motion are zero when their
order is greater than two. The case of subordinated Brownian motions is of
particular interest in finance for two reasons: first they link deviation of
normality of asset returns to trade activity and second they often have
analytical characteristic functions. A subclass of multivariate subordinated Brownian motions widely used in
finance because of their economic interpretation is the so called $\rho\alpha
$-models, see \cite{LuciSem1}. These models exhibit a flexible dependence
structure and allow to model also high correlations. They also incorporate
non-linear dependence as discussed in \cite{LuciSem1}: however this feature
has not be investigated so far. 

In Section $3,$ we propose to use joint cumulants to study higher order
dependence and its behaviour in time. Indeed, as well known, the covariance matrix completely describes the dependence structure among components of a multivariate process only for Gaussian ones. Co-skewness and co-kurtosis measure extreme deviations or dispersions undergone by the components, as in the univariate case they measure asymmetry and fat-tailedness. Moreover, higher order cumulants play an important role in the
analysis of non-Gaussian data and allow to detect higher order
cross-correlations, a critical feature that exacerbates during financial
turmoils, see \cite{domino2018efficient}. If joint cumulants are
asymptotically zero, central limit theorems can be investigated to forecast
the market behavior or viceversa to choose among different models the one
which better incorporates non-linear dependence. As case study, we investigate
the non-linear dependence structure of $\rho\alpha$-models describing asset
returns as a superposition of an idiosyncratic component, due to the asset
specific trades, and a systematic one, due to the overall trade. We focus our
attention on the Normal Inverse Gaussian (NIG) specification, whose one
dimensional marginals are NIG processes and discuss the role played by the model
parameters in driving non-linear dependence.

%---------------------------------------------------------------

\section{Cumulants of multivariate subordinated L\'{e}vy processes}

%----------------------------------------------------------------------
Let us consider the multiparameter L\'{e}vy process $\boldsymbol{L}%
(\boldsymbol{s})=\boldsymbol{A}\boldsymbol{Z}(\boldsymbol{s})$ where
$\boldsymbol{Z}(\boldsymbol{s})=(Z_{1}(s_{1}),\ldots,Z_{d}(s_{d})),$
$\boldsymbol{s}\in\mathbb{R}_{+}^{d},$ is a multiparameter L\'{e}vy process
with independent components and $\boldsymbol{A}=(a_{ij})_{n\times d}%
\in{\mathbb{R}}^{n\times d}.$ According to \cite{Ba}, the subordinated process
$\boldsymbol{Y}(t)$
\begin{equation}
\boldsymbol{Y}(t):=\boldsymbol{A}\boldsymbol{Z}(\boldsymbol{T}%
(t))=\boldsymbol{A}(Z_{1}(T_{1}(t)),\cdots,Z_{d}(T_{d}(t)))^{T}\label{BMsub}%
\end{equation}
is a L\'{e}vy process. It's a straightforward consequence of Theorem 4.7 in
\cite{Ba} to prove that
\begin{equation}
K_{\boldsymbol{Y}}(\boldsymbol{z})=K_{\boldsymbol{T}}\left(  K_{Z_{1}%
}\left(\sum_{m=1}^{n}a_{m1}z_{m}\right),\ldots,K_{Z_{d}}\left(\sum_{m=1}%
^{n}a_{md}z_{m}\right)\right)  \!\!,\,\boldsymbol{z}\in\mathbb{R}%
^{n}\!\!.\label{MGFgen}%
\end{equation}
Indeed if $\boldsymbol{\delta}_{j}=(\delta_{j1},\ldots,\delta_{jd})$ with
Kronecker's $\delta_{jk},$ then $\boldsymbol{L}(\boldsymbol{\delta}_{j})=(a_{1j}Z_{j}(1),\ldots,a_{nj}Z_{j}(1))^{T}$ has mgf
\begin{equation}
M_{j}(\boldsymbol{z})=E\left[  \exp\left(  Z_{j}(1)\sum_{m=1}^{n}a_{mj}%
z_{m}\right)  \right]  =M_{Z_{j}}\left(  \sum_{m=1}^{n}a_{mj}z_{m}\right)
.\label{momZ}%
\end{equation}
Since $\log M_{Z_{j}}$ is the cgf of ${Z_{j}}(1)$ and $K_{\boldsymbol{Y}%
}(\boldsymbol{z})=\log M_{\boldsymbol{Y}}(\boldsymbol{z}),$ then
(\ref{MGFgen}) follows by plugging (\ref{momZ}) in $M_{\boldsymbol{Y}%
}(\boldsymbol{z})=\exp[K_{\boldsymbol{T}}\left(\log M_{1}(\boldsymbol{z}%
),\ldots,\log M_{d}(\boldsymbol{z})\right)].$ In order to recover the
$\ibs$-th cumulant $c_{\boldsymbol{i}}(\boldsymbol{Y})$  of $\boldsymbol{Y}(1),$ we need to expand in formal power series the cgf $K_{\boldsymbol{Y}%
}(\boldsymbol{z})$ given in (\ref{MGFgen}). To this aim, let us recall the
notion of multi-index partition introduced in \cite{di2011new}.

\begin{definition}
\label{multpart1} A partition of a multi-index ${\boldsymbol{i}} = (i_{1},
\ldots, i_{n}) \in{\mathbb{N}}^{n}$ is a matrix $\Lambda$ of non-negative
integers with $n$ rows and no zero columns in lexicographic order, such that
$\lambda_{s 1}+\lambda_{s 2}+\cdots=i_{s}$ for $s = 1,2,\ldots,n.$
\end{definition}

\vskip-0.2cm As for integer partitions, let us fix some notation:\newline%
\indent
$\bullet$ $|\Lambda|$ is the sum of all components of $\Lambda$\newline%
\indent
$\bullet$ $\Lambda=(\boldsymbol{\lambda}_{1}^{r_{1}}, \boldsymbol{\lambda}
_{2}^{r_{2}}, \ldots) \vdash{\boldsymbol{i}}$ denotes the multi-index
partition of ${\boldsymbol{i}}$ with $r_{1}$ columns equal to
$\boldsymbol{\lambda}_{1},$ $r_{2}$ \newline\indent columns equal to
$\boldsymbol{\lambda}_{2}$ and so on, with $\boldsymbol{\lambda}_{1} <
\boldsymbol{\lambda}_{2} < \ldots$\newline\indent
$\bullet$ $\mathfrak{m}(\Lambda)=(r_{1}, r_{2}, \ldots)$ is the vector of
multiplicities of $\boldsymbol{\lambda}_{1}, \boldsymbol{\lambda}_{2}, \ldots$
\newline\indent
$\bullet$ $l(\Lambda)=|\mathfrak{m}(\Lambda)|=r_{1} + r_{2} + \cdots$ is the
number of columns of $\Lambda$ and $\Lambda! = \prod_{j=1}^{l(\Lambda
)}(\boldsymbol{\lambda}_{j}!)^{r_{j}}$\newline\indent
$\bullet$ given the multi-indexed sequence $\{g_{\boldsymbol{j}}\},$ the
product $g_{\Lambda}= \prod_{j=1}^{l(\Lambda)}g_{\boldsymbol{\lambda}_{j}
}^{r_{j}}$ is said associated to the \newline\indent
sequence through $\Lambda\vdash{\boldsymbol{i}},$ in particular $g_{\Lambda} =
1$ if $\Lambda\vdash{\boldsymbol{i}} ={\boldsymbol{0}}.$ \smallskip \newline
The $\boldsymbol{i} $-th coefficient of $\exp \big( \sum_{k=1}^{d} x_{k}
g_{k}(\boldsymbol{z}) \big)$ with $g_{k}(\boldsymbol{z}) = \sum
_{\boldsymbol{j}: |\boldsymbol{j}| > 0} g_{k,\boldsymbol{j}} \frac
{\boldsymbol{z}^{\boldsymbol{j}}}{\boldsymbol{j} !},$ is the $\boldsymbol{i}%
$-th generalized (complete exponential) Bell polynomial (\cite{di2011new} )
\begin{equation}
{\mathfrak{B}}_{\boldsymbol{i}}(x_{1}, \ldots, x_{d}) = \boldsymbol{i}! \sum_{%
%TCIMACRO{\QATOP{\QTR{group}{\Lambda} \vdash\QTR{bs}{s}_{1}, \ldots,
%\tilde{\Lambda} \vdash\QTR{bs}{s}_{d} }{\QTR{bs}{s}_{1} + \cdots+
%\QTR{bs}{s}_{d}=\QTR{bs}{i}}}%
%BeginExpansion
\genfrac{}{}{0pt}{}{{\Lambda} \vdash\boldsymbol{s}_{1}, \ldots, \tilde
{\Lambda} \vdash\boldsymbol{s}_{d} }{\boldsymbol{s}_{1} + \cdots+
\boldsymbol{s}_{d}=\boldsymbol{i}}%
%EndExpansion
} x_{1}^{l(\Lambda)} \cdots x_{d}^{l(\tilde{\Lambda})} \frac{g_{1,\Lambda}
\cdots g_{d,\tilde{\Lambda} }}{\Lambda! \cdots\tilde{\Lambda}! \,
\mathfrak{m}(\Lambda)! \cdots\mathfrak{m}(\tilde{\Lambda})!}
\label{(sol22ter)}%
\end{equation}
\vskip-0.2cm \noindent where $g_{k,\Lambda}$ is associated to the sequence
$\{g_{k,\boldsymbol{j}}\}$ through $\Lambda\vdash\boldsymbol{s}_{k}$ for $k=1,
\ldots,d.$

\begin{theorem}
\label{(sol22)} The $\boldsymbol{i}$-th cumulant of $\boldsymbol{Y}$ is
$c_{\boldsymbol{i}}(\boldsymbol{Y}) = {E} \left(  {\mathfrak{B}%
}_{\boldsymbol{i}}(T_{1}, \ldots, T_{d}) \right)  $ where
\begin{description}
\label{cumulants}
\item[{\it i)}] ${E}$ is an \textit{umbral} evaluation linear
operator (\cite{di2011new}) such ${E}(T_{1}^{i_{1}} \cdots
T_{d}^{i_{d}})= c_{\boldsymbol{i}}({\boldsymbol{T}}), \boldsymbol{i}
\in{\mathbb{N}}^{n},$ the $\boldsymbol{i}$-th joint cumulant of
${\boldsymbol{T}};$
\item[{\it ii)}] $g_{k,\Lambda}$ is associated to $\{g_{k,
\boldsymbol{\lambda}_{j}}\}$ with $g_{k, \boldsymbol{\lambda}_{j}} =
c_{|\boldsymbol{\lambda}_{j}|}(Z_{k}) (\boldsymbol{a}_{.k}%
)^{\boldsymbol{\lambda}_{j}}$ where $\{c_{|\boldsymbol{\lambda}_{j}|}%
(Z_{k})\}$ are cumulants of $Z_{k}$ and $\boldsymbol{a}_{.k}$ is the $k$-th
column of $\boldsymbol{A}.$
\end{description}
\end{theorem}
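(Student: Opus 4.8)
The plan is to extract the $\boldsymbol{i}$-th Taylor coefficient of $K_{\boldsymbol{Y}}$ directly from the composition (\ref{MGFgen}) and to match it with the generalized Bell polynomial (\ref{(sol22ter)}) through the linear operator ${E}$. All computations take place in the ring of formal power series, where the substitution performed in (\ref{MGFgen}) is legitimate because each inner series has vanishing constant term.

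First I would identify the coefficients of the inner series $g_{k}(\boldsymbol{z}):=K_{Z_{k}}\!\big(\sum_{m=1}^{n}a_{mk}z_{m}\big)$, which gives item {\it ii)}. Since $K_{Z_{k}}(w)=\sum_{r\ge 1}c_{r}(Z_{k})\,w^{r}/r!$ has no constant term and the multinomial theorem gives $\big(\sum_{m}a_{mk}z_{m}\big)^{r}=\sum_{|\boldsymbol{j}|=r}\frac{r!}{\boldsymbol{j}!}\,(\boldsymbol{a}_{.k})^{\boldsymbol{j}}\,\boldsymbol{z}^{\boldsymbol{j}}$, collecting the coefficient of $\boldsymbol{z}^{\boldsymbol{j}}/\boldsymbol{j}!$ in $g_{k}$ yields $g_{k,\boldsymbol{j}}=c_{|\boldsymbol{j}|}(Z_{k})\,(\boldsymbol{a}_{.k})^{\boldsymbol{j}}$, exactly the prescription of {\it ii)} with $\boldsymbol{\lambda}_{j}$ in the role of $\boldsymbol{j}$. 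In particular $g_{k}(\boldsymbol{0})=0$, so the composition used below is well defined.

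Next I would unwind the definition of the generalized Bell polynomials through the elementary identity obtained by expanding the exponential and then the power by the multinomial theorem in the $d$ summands:
\[ \exp\!\Big(\sum_{k=1}^{d}x_{k}\,g_{k}(\boldsymbol{z})\Big)=\sum_{\boldsymbol{\ell}\in{\mathbb{N}}^{d}}\frac{x_{1}^{\ell_{1}}\cdots x_{d}^{\ell_{d}}}{\boldsymbol{\ell}!}\prod_{k=1}^{d}g_{k}(\boldsymbol{z})^{\ell_{k}}. \]
Let $b_{\boldsymbol{i},\boldsymbol{\ell}}$ be the coefficient of $\boldsymbol{z}^{\boldsymbol{i}}/\boldsymbol{i}!$ in $\prod_{k=1}^{d}g_{k}(\boldsymbol{z})^{\ell_{k}}$. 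Reading off the coefficient of $\boldsymbol{z}^{\boldsymbol{i}}/\boldsymbol{i}!$ on both sides and recalling that the left-hand side is ${\mathfrak{B}}_{\boldsymbol{i}}(x_{1},\ldots,x_{d})$ by definition, we get the monomial expansion ${\mathfrak{B}}_{\boldsymbol{i}}(x_{1},\ldots,x_{d})=\sum_{\boldsymbol{\ell}}\frac{b_{\boldsymbol{i},\boldsymbol{\ell}}}{\boldsymbol{\ell}!}\,x_{1}^{\ell_{1}}\cdots x_{d}^{\ell_{d}}$. (Resumming $b_{\boldsymbol{i},\boldsymbol{\ell}}$ over the ordered factorizations of the exponent vectors into nonzero multi-indices, grouped into multi-index partitions $\Lambda^{(k)}\vdash\boldsymbol{s}_{k}$ with $l(\Lambda^{(k)})=\ell_{k}$, recovers the closed form (\ref{(sol22ter)}); matching the symmetry denominators $\Lambda!\,\mathfrak{m}(\Lambda)!$ and checking that the exponent of $x_{k}$ is exactly the column count $l(\Lambda^{(k)})$ is the only genuinely combinatorial point, and is not needed if one works with the monomial expansion.)

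Finally, by (\ref{MGFgen}) and the definition of the joint cumulant generating function, $K_{\boldsymbol{Y}}(\boldsymbol{z})=K_{\boldsymbol{T}}\big(g_{1}(\boldsymbol{z}),\ldots,g_{d}(\boldsymbol{z})\big)=\sum_{\boldsymbol{\ell}\in{\mathbb{N}}^{d}}\frac{c_{\boldsymbol{\ell}}(\boldsymbol{T})}{\boldsymbol{\ell}!}\prod_{k=1}^{d}g_{k}(\boldsymbol{z})^{\ell_{k}}$ with $c_{\boldsymbol{0}}(\boldsymbol{T})=0$; extracting the coefficient of $\boldsymbol{z}^{\boldsymbol{i}}/\boldsymbol{i}!$ gives $c_{\boldsymbol{i}}(\boldsymbol{Y})=\sum_{\boldsymbol{\ell}}\frac{b_{\boldsymbol{i},\boldsymbol{\ell}}}{\boldsymbol{\ell}!}\,c_{\boldsymbol{\ell}}(\boldsymbol{T})$. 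Comparing with the monomial expansion of ${\mathfrak{B}}_{\boldsymbol{i}}$ above, this is precisely the result of substituting $x_{k}\mapsto T_{k}$ and applying the linear map ${E}$ that sends $T_{1}^{\ell_{1}}\cdots T_{d}^{\ell_{d}}$ to the joint cumulant $c_{\boldsymbol{\ell}}(\boldsymbol{T})$; hence $c_{\boldsymbol{i}}(\boldsymbol{Y})={E}\big({\mathfrak{B}}_{\boldsymbol{i}}(T_{1},\ldots,T_{d})\big)$, which is {\it i)} and concludes the proof. I expect the main obstacle to be keeping these formal power series manipulations rigorous and, if one prefers to display the closed form (\ref{(sol22ter)}) rather than the monomial expansion, carrying out the partition bookkeeping that turns the ordered sum defining $b_{\boldsymbol{i},\boldsymbol{\ell}}$ into the weighted sum over multi-index partitions with the multiplicity factors $1/\mathfrak{m}(\Lambda^{(k)})!$.
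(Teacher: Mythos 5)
Your proposal is correct and follows essentially the same route as the paper: write $K_{\boldsymbol{Y}}$ as the composition $K_{\boldsymbol{T}}[g_1(\boldsymbol{z}),\ldots,g_d(\boldsymbol{z})]$, obtain the $\boldsymbol{i}$-th coefficient by expanding $\exp\big(\sum_k x_k g_k(\boldsymbol{z})\big)$ and replacing each monomial $x_1^{\ell_1}\cdots x_d^{\ell_d}$ by the corresponding coefficient $c_{\boldsymbol{\ell}}(\boldsymbol{T})$ of $K_{\boldsymbol{T}}$ (the umbral evaluation $E$), and identify the inner coefficients $g_{k,\boldsymbol{\lambda}}=c_{|\boldsymbol{\lambda}|}(Z_k)(\boldsymbol{a}_{.k})^{\boldsymbol{\lambda}}$ by expanding $K_{Z_k}$. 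You merely make explicit, via the monomial expansion and the multinomial theorem, the steps the paper delegates to the cited reference.
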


\begin{proof}
From (\ref{MGFgen}) the cgf of $\YY$ is a composition $f[g_1(\zz), \ldots, g_d(\zz)],$ with $g_{k}(\zz)=K_{Z_k}(\sum_{m=1}^{n}a_{mk} z_{m})$ and $f(\tilde{\zz}) = K_{\TT}(\tilde{\zz}), \tilde{\zz} = (\tilde{z}_1, \ldots, \tilde{z}_d).$
Then the $\ibs$-th coefficient of $f[g_1(\zz), \ldots, g_d(\zz)]$ is obtained by expanding in formal power series $\exp \big( \sum_{k=1}^d x_k g_k(\zz) \big)$ and then by applying the evaluation linear operator ${\mathbb E},$ see \cite{di2011new}  for the details. Thus we recover (\ref{(sol22ter)}) with the product $x_1^{l(\Lambda)} \cdots x_d^{l(\tilde{\Lambda})}$ replaced by the $(l(\Lambda), \ldots, l(\tilde{\Lambda}))$-th coefficient of the formal power series $f(\tilde{\zz}),$   that is the joint cumulant $c_{l(\Lambda), \ldots, l(\tilde{\Lambda})}({\TT}).$ By further expanding $K_{Z_k}$ in formal power series, we  have $g_{k,\boldsymbol{\lambda}}=c_{|\boldsymbol{\lambda}|}(Z_k) (\boldsymbol{a}_{.k})^{\boldsymbol{\lambda}}$ where $c_{|\boldsymbol{\lambda}|}(Z_k)$ is the $|\boldsymbol{\lambda}|$-th cumulant of $Z_k$ and $\boldsymbol{a}_{.k}$ is the $k$-th column of $\AA.$  Thus the result follows from \eqref{(sol22ter)}.
\end{proof}
%---------------------------------------------------------------------------

\subsection{Multivariate subordinated Brownian motion}

\label{SBM}
%---------------------------------------------------------------------------
As multiparameter L\'evy process $\boldsymbol{Z}(\boldsymbol{s})$ in
\eqref{BMsub} let us consider $\boldsymbol{B}(\boldsymbol{s}) = (B_{1}(s_{1}),
\ldots,B_{d}(s_{d}))^{\top}, \boldsymbol{s}\in\mathbb{R}^{d}_{+},$ obtained
from a $\mathbb{R}^{d}$-valued Brownian motion $\{ \boldsymbol{B}(t), t
\geq0\}$ with independent components, drift ${\boldsymbol{\mu}}$ and
covariance matrix ${\boldsymbol{\Sigma}}=\mathrm{diag}({\sigma}_{1}^{2}%
,\ldots,{\sigma}_{d}^{2}),$ and the multiparameter L\'evy process
\begin{equation}
\label{Brho}{\boldsymbol{B}_{A}}(\boldsymbol{s}):=\boldsymbol{A}
{{\boldsymbol{B}}}(\boldsymbol{s}) = \boldsymbol{A} (B_{1}(s_{1}),
\ldots,B_{d}(s_{d}))^{\top}, \,\,\, \boldsymbol{s}\in\mathbb{R}^{d}_{+}.
\end{equation}
Note that $\boldsymbol{B}_{A}(t)=\boldsymbol{A} {\boldsymbol{B}}(t)
=\boldsymbol{A} ({B}_{1}(t),\ldots,{B}_{n}(t))^{\top}$ has drift
$\boldsymbol{\mu}_{A}=\boldsymbol{A} \boldsymbol{\mu}$ and covariance matrix
$\boldsymbol{\Sigma}_{A}=\boldsymbol{A} \boldsymbol{\Sigma} \boldsymbol{A}%
^{\top}$ and that, if $n=d$ and $\boldsymbol{A}$ is the identity matrix, we
recover the subcase of independent Brownian motions. Suppose $\boldsymbol{T}
(t)$ independent of $\boldsymbol{B}_{A}(\boldsymbol{s})$ and consider
$\boldsymbol{Y}(t)=\boldsymbol{B}_{A}(\boldsymbol{T}(t)).$ Theorem
\ref{cumulants} allows us to compute the $\boldsymbol{i}$-th cumulant
$c_{\boldsymbol{i}}(\boldsymbol{Y})$ by taking advantage of the well-known
property that cumulants of a Brownian motion are zero when their order is
greater than $2.$ Indeed in (\ref{(sol22ter)}), the products $g_{k,\Lambda} =
\prod_{j=1}^{l(\Lambda)}g_{k,\boldsymbol{\lambda}_{j}} ^{r_{j}}$ are not zero
if and only if $|\boldsymbol{\lambda}_{j}| \leq2$ that is $\Lambda
\in{\mathcal{P}}_{2}(\boldsymbol{s}_{k})= \{ (\boldsymbol{\lambda}_{1}^{r_{1}%
}, \boldsymbol{\lambda}_{2}^{r_{2}}, \ldots) \vdash\boldsymbol{s}:
|\boldsymbol{\lambda}|\leq2 \},$ the set of all multi-index partitions whose
columns have sum of components not greater than $2.$ Thus the summation in
(\ref{(sol22ter)}) reduces to $\{\Lambda\in{\mathcal{P}}_{2}(\boldsymbol{s}
_{1}), \ldots, \tilde{\Lambda} \in{\mathcal{P}}_{2}(\boldsymbol{s}_{d}) \,
\mathrm{with} \, \boldsymbol{s}_{1} + \cdots+ \boldsymbol{s}_{d}
=\boldsymbol{i}\}.$ Moreover the sequence $\{g_{k,\boldsymbol{\lambda}_{j}}\}$
is given by
\begin{equation}
g_{k,\boldsymbol{\lambda}_{j}} = \left\{
\begin{array}
[c]{ll}%
a_{mj} \mu_{j} &
\hbox{if $|\lambdabs_j|=1$ and $ (\lambdabs_j)_m= 1$ for $m \in [n],$}\\
a_{mj}^{2} \sigma^{2}_{j} & \hbox{if $|\lambdabs_j|=2$ and
$(\lambdabs_j)_m= 2$ for $m \in [n],$}\\
a_{m_{1} j} a_{m_{2} j} \sigma_{j}^{2} &
\hbox{if $|\lambdabs_j|=2$ and $(\lambdabs_j)_{m_1}=(\lambdabs_j)_{m_2}=1$
for $m_1 \ne m_2 \in [n],$}\\
0 & \hbox{if $|\lambdabs_j| >2,$}
\end{array}
\right.  \label{(cum1fin)}%
\end{equation}
with $[n]=\{1, \ldots,n\}.$ Indeed in (\ref{MGFgen}) the cgf $K_{Z_{k}}%
(\sum_{m=1}^{n} a_{mk} z_{m})$ reduces to $\mu_{k}\sum_{m=1}^{n} a_{mk}z_{m}
+\frac{1}{2} \sigma_{k}^{2}\big(\sum_{m=1}^{n} a_{mk} z_{m}\big)^{2}$ as
$\{Z_{k}\}_{k=1}^{d}$ are Gaussian distributed with mean $\{\mu_{k}%
\}_{k=1}^{d}$ and variance $\{\sigma^{2}_{k}\}_{k=1}^{d}.$ Note that when
$d=1, {E} \left(  {\mathfrak{B}}_{\boldsymbol{i}}(T_{1}, \ldots,
T_{d}) \right)  $ in Theorem \ref{cumulants} reduces to
\vskip-0.3cm
\begin{equation}
c_{\boldsymbol{i}}(\boldsymbol{Y}) = \boldsymbol{i}! \sum_{\Lambda
\in{\mathcal{P}}_{2}(\boldsymbol{i})} c_{l(\Lambda)}(T) \prod_{s=1}%
^{l(\lambda)} \frac{g_{\boldsymbol{\lambda}_{s}}^{r_{s}}}{(\boldsymbol{\lambda
}_{s}!)^{r_{s}} r_{s}!}. \label{(sol228bis)}%
\end{equation}
%------------------------------------------------------------------------------------------------------------------------

\subsection{The $\rho\alpha$-model}

%------------------------------------------------------------------------------------------------------------------------
In this section, we consider a class of processes used in finance to model
asset returns: the $\rho\alpha$-models. \cite{jevtic2017Marked} proved that
they belong to the class of multivariate subordinated Brownian motions
\eqref{Brho}, by properly choosing the matrix $A$ and the subordinator
$\boldsymbol{T}(t).$

A $\rho\alpha$-model (\cite{LuciSem1}) is constructed by subordinating $n$
independent Brownian motions $B_{j}(t)$ with $n$ independent subordinators
$\{X_{j}(t)\}$ and by subordinating a multidimensional Brownian motion
$\boldsymbol{B}^{\rho}(t)$ with a unique subordinator $Z(t).$ More in details,
let $\boldsymbol{B}^{\rho}(t)=(B_{1}^{\rho}(t),...,B_{n}^{\rho}(t))$ be a
multivariate Brownian motion, with correlations $\boldsymbol{\rho}$ and
L\'{e}vy triplet $(\boldsymbol{\mu}^{\rho},\boldsymbol{\Sigma}^{\rho
},\boldsymbol{0})$, where $\boldsymbol{\mu}^{\rho}=(\mu_{1}\alpha_{1}%
,\ldots,\mu_{n}\alpha_{n}),$ with $\boldsymbol{\mu}\in\mathbb{R}^{n},$ and
$\boldsymbol{\Sigma}^{\rho}:=(\rho_{ij}\sigma_{i}\sigma_{j}\sqrt{{\alpha_{i}}%
}\sqrt{{\alpha_{j}}})_{ij}$, with $\sigma_{j}>0$ and $\rho_{ij}\in
\lbrack-1,1]$ for $i,j=1,\ldots,n.$ Assume $\boldsymbol{B}^{\rho}(t)$
independent of $\boldsymbol{B}(\boldsymbol{s})=(B_{1}(s_{1}),\ldots
,B_{n}(s_{n}))^{\top}$ obtained from a $\mathbb{R}^{n}$-valued Brownian motion
$\{\boldsymbol{B}(t),t\geq0\}$ with independent components, drift
${\boldsymbol{\mu}}$ and covariance matrix ${\boldsymbol{\Sigma}%
}=\mathrm{diag}({\sigma}_{1}^{2},\ldots,{\sigma}_{n}^{2}).$ The $\rho\alpha
$-model $\boldsymbol{Y}(t)$ is the $\mathbb{R}^{n}$-valued subordinated
process
\begin{equation}
\boldsymbol{Y}(t)=\left(
\begin{array}
[c]{c}%
Y_{1}^{I}(t)+Y_{1}^{\rho}(t)\\
\vdots\\
Y_{n}^{I}(t)+Y_{n}^{\rho}(t)
\end{array}
\right)  =\left(
\begin{array}
[c]{c}%
B_{1}(X_{1}(t))+B_{1}^{\rho}(Z(t))\\
\vdots\\
B_{n}(X_{n}(t))+B_{n}^{\rho}(Z(t))
\end{array}
\right)  ,\label{abgp}%
\end{equation}
where $X_{j}(t)$ and $Z(t)$ are independent subordinators, independent of
$\boldsymbol{B}(t)$ and $\boldsymbol{B}^{\rho}(t).$ If $\rho=0$, the process
$\boldsymbol{Y}(t)$ is said an $\alpha$-model. As $\boldsymbol{Y}^{I}$ has
independent margins, from the additivity property of cumulants, we have
$K_{\boldsymbol{Y}}(\boldsymbol{z})=K_{\boldsymbol{Y}^{I}}(\boldsymbol{z}%
)+K_{\boldsymbol{Y}^{\rho}}(\boldsymbol{z})=\sum_{j=1}^{n}g_{j}(\boldsymbol{z}%
)+f(\boldsymbol{z}),$ where
\begin{equation}
g_{j}(\boldsymbol{z})=K_{X_{j}}\left(  \mu_{j}z_{j}+\frac{1}{2}\sigma_{j}%
^{2}z_{j}^{2}\right)  \quad\mathrm{and}\quad f(\boldsymbol{z})=K_{Z}\left(
\boldsymbol{z}^{T}\boldsymbol{\mu}^{\rho}+\frac{1}{2}\boldsymbol{z}%
^{T}\boldsymbol{\Sigma}^{\rho}\boldsymbol{z}\right)  ,\label{cumFBgen}%
\end{equation}
and joint cumulants \vskip-0.3cm
\begin{equation}
c_{\boldsymbol{i}}(\boldsymbol{Y})=\left\{
\begin{array}
[c]{ll}%
\sum_{j=1}^{n}g_{j,\boldsymbol{i}}+f_{\boldsymbol{i}} &
\hbox{if $\ibs=(0,\ldots, i_m, \ldots, 0),$}\\
f_{\boldsymbol{i}} & \hbox{otherwise,}
\end{array}
\right.  \label{(cumrhoalpha)}%
\end{equation}
with $g_{j,\boldsymbol{i}}$ and $f_{\boldsymbol{i}}$ the $\boldsymbol{i}$-th
coefficient of $g_{j}(\boldsymbol{z})$ and $f(\boldsymbol{z})$ in
\eqref{cumFBgen} respectively. By using \eqref{(sol228bis)} we have
\begin{equation}
g_{j,\boldsymbol{i}}=\boldsymbol{i}!\sum_{\Lambda\vdash\boldsymbol{i}%
}c_{l(\Lambda)}(X_{j})\prod_{s=1}^{l(\Lambda)}\frac{(g_{j,\boldsymbol{\lambda
}_{s}})^{r_{s}}}{r_{s}!(\boldsymbol{\lambda}_{s}!)^{r_{s}}}\qquad
\mathrm{and}\qquad f_{\boldsymbol{i}}=\boldsymbol{i}!\sum_{\Lambda
\vdash\boldsymbol{i}}c_{l(\Lambda)}(Z)\prod_{s=1}^{l(\Lambda)}\frac{(\tilde
{g}_{\boldsymbol{\lambda}_{s}})^{r_{s}}}{r_{s}!(\boldsymbol{\lambda}%
_{s}!)^{r_{s}}}\label{(sol11)}%
\end{equation}
where $c_{l(\Lambda)}(X_{j})$ and $c_{l(\Lambda)}(Z)$ are the $l(\Lambda)$-th
cumulants of $X_{j}$ and $Z$ respectively, $\{g_{j,\boldsymbol{\lambda}_{s}%
}\}$ and $\{\tilde{g}_{\boldsymbol{\lambda}_{s}}\}$ are the coefficients of
the inner power series of $K_{X_{j}}$ and $K_{Z}$ respectively, as given in
(\ref{cumFBgen}). To discuss non-linear dependence for the bivariate case in
Section $3,$ we consider the normalized cumulants
\begin{equation}
\bar{c}_{i,j}(t)=\frac{c_{i,j}[\boldsymbol{Y}(t)]}{\left(  c_{2}\left[
Y_{1}(t)\right]  \right)  ^{i/2}\left(  c_{2}\left[  Y_{2}(t)\right)  \right]
^{j/2}},\quad i,j\in\mathbb{N}\label{normcum}%
\end{equation}
as a function of the time scale. For $i+j\leq4$ it's straightforward to get
\begin{equation}%
\begin{array}
[c]{rclrcl}%
\bar{c}_{1,1}(t) & = & a(b_{1,1}+d_{1,1}\rho_{12}) & \bar{c}_{1,2}(t) & = &
\frac{a}{\sqrt{t}}(b_{1,2}+d_{1,2}\rho_{12})\\
\bar{c}_{1,3}(t) & = & \frac{a}{t}(b_{1,3}+d_{1,3}\rho_{12}) & \bar{c}%
_{2,2}(t) & = & \frac{a}{t}(b_{2,2}+d_{2,2}\rho_{12}+e_{2,2}\rho_{12}^{2})
\end{array}
\label{cc4}%
\end{equation}
where $b_{i,j},d_{i,j}$ and $e_{i,j}$ are functions of the marginal parameters
$(\mu_{i},\sigma_{i},\alpha_{i})$. Cumulants of $\boldsymbol{Y}(t)$ increase
linearly in $t$ so that co-skewness measures are proportional to $1/\sqrt{t}%
,$ while co-kurtosis measures are proportional to $1/t,$ converging to Gaussian
values asymptotically. Notice that $a$ is a scale parameter for all cross-cumulants,
driving the general level of dependence, both linear and non-linear.
Furthermore, the Brownian motion correlation $\rho_{12},$ providing
an extra-term in (\ref{cc4}), affects non only asset correlation measured by
$\bar{c}_{1,1}(t),$ but also non-linear dependence measured by the
other cross-cumulants.  Thus the $\rho\alpha$-models not
only span a wider range of linear dependence compared to the $\alpha$-models,
but they can also incorporate higher non-linear dependencies. 

%----------------------------------------------------------------------
\section{A case study}\label{Ap}
%------------------------------------------------------------------------
\vskip-0.2cm
To show the role played by cumulants in analyzing non-linear dependence over
 time, let us consider a bivariate price process $\{\boldsymbol{S}%
(t),\,t\geq0\}$ such that $\boldsymbol{S}(t)=\boldsymbol{S}(0)\exp
(\boldsymbol{d}t+\boldsymbol{Y}(t)),$ where $\boldsymbol{d}\in\mathbb{R}^{2}$
is the drift term, not affecting the dependence structure, and $\boldsymbol{Y}%
(t)$ is a bivariate L\'{e}vy process.
%Daily logreturns are defined as
%\begin{equation}
%R_{j}=d_{j}+Y_{j}\left( 1\right) =\log \frac{S_{j}\left( 1\right) }{
%S_{j}\left( 0\right) },\quad j=1,\ldots,n.
%\label{(Daily)}
%\end{equation}
Since we are in the class of multivariate L\'{e}vy models, the centered
asymptotic distribution of daily logreturns is a bivariate Normal distribution
$N(\boldsymbol{0},\Sigma)$ where $\Sigma$ is the constant covariance matrix of
the process (see \cite{jammalamadaka2004higher}). This analysis is performed considering the NIG specification of the $\rho\alpha$-model.

\subsection{NIG specification}

Recall that a NIG process $Y(t)$ has no Gaussian component, is of infinite
variation and can be constructed by subordination as $Y(t)=\beta\delta
T(t)+\delta^{2} B(T(t)),$ where $T \sim\mathrm{IG}(1, \delta\sqrt{\gamma
^{2}-\beta^{2}})$ is independent of the standard Brownian motion $B(t).$ In
particular the time one distribution $Y(1)$ has a NIG distribution
$\mathrm{NIG}(\gamma, \delta, \beta)$ with parameters $\gamma>0,
|\beta|<\gamma$ and $\delta>0$.

Now, let us consider the $\rho\alpha$-model (\ref{abgp}), with $X_{j}%
\sim\mathrm{IG}( 1-{a}\sqrt{\alpha_{j}},\alpha_{j}^{\scriptscriptstyle -1/2}%
)\, j=1,...,n$ and $Z\sim\mathrm{IG}(a,1)$ the time one distributions of the
subordinators $X_{j}(t)$ and $Z(t)$ respectively. If we choose the parameters
of $\boldsymbol{B}^{\rho}(t)$ and of the subordinators so that:
\begin{equation}
\alpha_{j}^{\scriptscriptstyle -1/2} = \delta_{j} \sqrt{\gamma_{j}^{2} -
\beta^{2}_{j}} \qquad\hbox{and} \qquad\mu_{j}=\beta_{j}\delta_{j}^{2},
\,\,\sigma_{j}=\delta_{j} \label{NIGpar}%
\end{equation}
with $\gamma_{j}, \delta_{j} \in\mathbb{R}_{+}, \beta_{j} \in\mathbb{R}$ and
$|\beta_{j}|<\gamma_{j}$ for $j=1, \ldots,n$, the subordinated process
$\boldsymbol{Y}(t)$ in \eqref{abgp} has the remarkable property that its one
dimensional marginal processes have NIG distributions NIG$(\gamma_{j},
\delta_{j}, \beta_{j})$ for $t=1.$ The process $\boldsymbol{Y}(t)$ is named
$\rho\alpha$-NIG process. Thus $c_{\boldsymbol{i}}(\boldsymbol{Y})$ is given
in \eqref{(cumrhoalpha)} for suitable replacements of the parameters in
\eqref{(sol11)}, taking into account that if $X \sim\mathrm{IG}(a,b),$ its
cumulants are $c_{1}(X)=a/b$ and $c_{k}(X)=a (2k-3)!!/b^{2k-1}$ for $k=2,3,
\ldots.$
%----------------------------------------------------------------------------
\subsection{Numerical results}
%----------------------------------------------------------------------------
In this section, we use the cross-cumulants \eqref{cc4} to study the evolution
in time of non-linear dependence in a bivariate $\rho\alpha$-NIG model as a
function of the common parameters $\rho\,$and $a$. 
In fact, dependence
is driven by the common subordinator $Z$ representing the systematic
component in two ways: first, through the common parameter $a$ which defines the distribution of the common time change; second, through the action of the Brownian motions' correlation $\rho$ on $Z$.  The reference parameter
set is given in \cite{jevtic2017Marked} to which we refer for further
details\footnote{The $\rho\alpha$-NIG specification has been calibrated by the
generalized method of moments (GMM) to a bivariate basket composed by Goldman
Sachs and Morgan Stanley US daily logreturns from January 3, 2011 to December
31st, 2015. Marginal parameters are: $\gamma_{1}=85.4175,\gamma_{2}%
=64.2544,\delta_{1}=0.0248,\delta_{2}=0.0335,\beta_{1}=-8.8886,\delta
_{2}=-13.5988$. The marginal drifts not involved in the dependence structure
are $d_{1}=0.0027$ and $d_{2}=0.0074$}. We plot the normalized cumulants
$\bar{c}_{i,j}(t)$ in \eqref{normcum} up to the fourth order as a function of
the Brownian correlation $\rho$. We consider two scenarios corresponding to different values of
the scaling parameter $a$. Since $a\in(0,2.1)$ and the limit value $a=0$
corresponds to independence, we choose the intermediate value $a=1.05$ and the
higher boundary value $a=2.1$. For each scenario, we plot the evolution of
cross-cumulants for three levels of time to maturity. Comparing the two
scenarios, it is evident that $a$ drives both linear and non-linear dependence
and it allows to reach maximal correlation. Nevertheless, also in the first
scenario, where the maximal attainable asset correlation is $0.5,$ moving
$\rho,$ we are able to incorporate non-linear dependence. The evolution in
time confirms that higher order cumulants go to zero according to the rates in
equation \eqref{cc4}. \begin{figure}[ptbh]
\centering
%gives better spacing than \begin{center}...\end{center}
\includegraphics[scale=0.70]{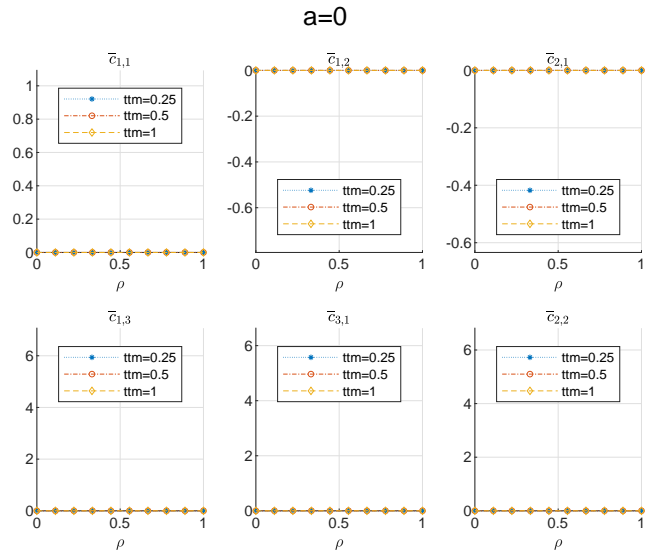}\caption{{\protect\footnotesize {Scenario
1. Normalized cross-cumulants.}}}%
\label{Mmoments}%
\end{figure}\begin{figure}[ptbh]
\centering
%gives better spacing than \begin{center}...\end{center}
\includegraphics[scale=0.70]{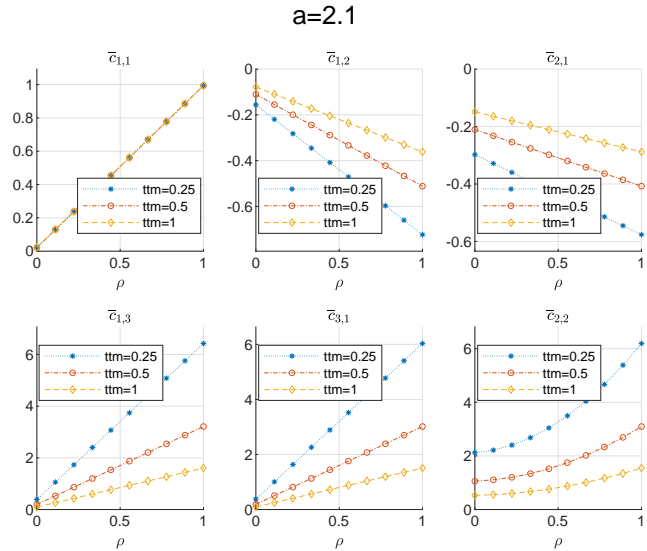}
\caption{{\protect\footnotesize {Scenario 2. Normalized cross-cumulants.} }}%
\label{moments}%
\end{figure}
%-------------------------------------------------------------
\section{Conclusions}
%------------------------------------------------------------
\vskip-0.2cm
Cumulants are a powerful tool to measure non-linear dependence. We take
advantage of the L\'{e}vy-Kinthchin representation of multiparameter
subordinated L\'{e}vy processes to find their cumulants in closed form. We use
them to study non-linear dependence captured by some class of processes widely
used in finance to model asset returns. Indeed higher order statistics and
suitable tests of hypotheses have been employed aiming to identify nonlinear
processes \cite{Masson}. Nevertheless, the closed formula we found has other
possible uses, as for example the estimate of the process parameters since the
maximum likelihood estimation is computationally cumbersome within a
multivariate framework. For large samples or high frequency data, the closed
formulae of multivariate cumulants can be matched to multivariate
$k$-statistics, unbiased estimators with minimum variance
\cite{di2015symbolic}, and thus estimates of parameters can be recovered by an
analogous of GMM. This is in the agenda of our future research.
%\bibliographystyle{}
%\bibliographystyle{apalike}
%\bibliography{Biblio}

\end{document}